\newcommand{\dps}{\displaystyle}
\def\BState{\State\hskip-\ALG@thistlm}
\def\downbar#1{
\setbox10=\hbox{$#1$}
            \dimen10=\ht10 \advance\dimen10 by 2.5pt
            \ifdim \dimen10<15pt 
               \advance\dimen10 by -0.5pt
               \dimen11=\dimen10
               \advance\dimen10 by 2.5pt
               \lower \dimen11
            \else \lower \ht10 \fi
            \hbox {\hskip 1.5pt \vrule height \dimen10 depth \dp10}}
\def\upbar#1{
\setbox10=\hbox{$#1$}
            \dimen10=\ht10 \advance\dimen10 by \dp10 \advance\dimen10 by 2.5pt
            \ifdim \dimen10<15pt 
                \advance\dimen10 by 2pt \fi
            \raise 2.5pt \hbox {\hskip -1.5pt \vrule height \dimen10}}
\newtheorem{proposition}{\bf Proposition}[section]
\newtheorem{conjecture}{\bf Conjecture}[section]
\numberwithin{equation}{section}
\begin{document}

\title[A proof of a conjecture]{A proof of a conjecture  about a symmetric system of orthogonal polynomials}
\author{K. Castillo}
\address{CMUC, Department of Mathematics, University of Coimbra, 3001-501 Coimbra, Portugal}
\email{kenier@mat.uc.pt}
\author{M. N. de Jesus}
\address{CI$\&$DETS/IPV, Polytechnic Institute of Viseu, ESTGV, Campus Polit\'ecnico de Repeses, 3504-510 Viseu, Portugal}
\email{mnasce@estv.ipv.pt}
\author{J. Petronilho}
\address{CMUC, Department of Mathematics, University of Coimbra, 3001-501 Coimbra, Portugal}
\email{josep@mat.uc.pt}

\subjclass[2010]{42C05, 33C45}
\date{\today}
\keywords{Orthogonal polynomials, Chebyshev polynomials, polynomial mappings}

\begin{abstract}
The purpose of this note is to give an affirmative answer to a conjecture appearing in
[Integral Transforms Spec. Funct. 26 (2015) 90--95].
\end{abstract}
\maketitle

\section{Introduction}\label{1}
The following conjecture is one of the open problems collected by C. Berg during the international symposium on ``Orthogonal Polynomials, Special Functions and Applications" (OPSFA-12), Sousse, Tunisia, March 29, 2013 (see \cite[p. 90]{B15}):
\vspace{2mm}
\begin{changemargin}{0.8cm}{0.8cm}
{\em ``We consider the weight $w(x)=|x+\frac{1}{2}|/\sqrt{1+x}+|x-\frac{1}{2}|/\sqrt{1-x}$ on $[-1,1]$, and claim that there exits a sequence of polynomials $\{P_n\}_{n\geq0}$ orthogonal with respect to $w(x)$ on $[-1,1]$ and which fulfils
\begin{equation}\label{poly}
\begin{array}{l}
\quad P_0(x)=1, \quad P_1(x)=x\\[7pt]
P_{n+2}(x)=xP_{n+1}(x)-\gamma_{n+1}P_n(x), \quad n\geq0
\end{array}
\end{equation}
such that
\begin{align*}
\gamma_1=\frac{1}{2}, \quad \gamma_2=\frac{1}{4}, \quad \gamma_3=\frac{7}{30}, \quad \gamma_4=\frac{4}{15},\\[7pt]
\gamma_3+\gamma_4=\frac{1}{2},\\[7pt]
\gamma_5=\frac{1}{4}, \quad \gamma_6=\frac{12}{49}, \quad \gamma_7=\frac{25}{98},\\[7pt]
\gamma_6+\gamma_7=\frac{1}{2},\\[7pt]
\gamma_8=\frac{1}{4}, \quad \gamma_9=\frac{3187}{12870}, \quad \gamma_{10}=\frac{1624}{6435},\\[7pt]
\gamma_9+\gamma_{10}=\frac{1}{2},
\end{align*}
that is to say
\begin{align}\label{cond}
\gamma_1=\frac{1}{2}, \quad \gamma_{3n+2}=\frac{1}{4}, \quad \gamma_{3n+3}+\gamma_{3n+4}=\frac{1}{2}."
\end{align}
}
\end{changemargin}
\vspace{1.5mm}
According to information provided in \cite{B15}, the problem was raised by M. J. Atia (see also \cite[p. 46]{AL12}), not only in OPSFA-12 but also in OPSFA-10 (Leuven, 2009) and OPSFA-11 (Madrid, 2011). However, the conjecture is ill-posed because \eqref{cond} does not necessarily implies
\begin{align}\label{coef}
\gamma_3=\frac{7}{30}, \quad \gamma_4=\frac{4}{15}, \quad \gamma_6=\frac{12}{49},\quad \gamma_7=\frac{25}{98}, \quad \gamma_9=\frac{3187}{12870}, \quad \gamma_{10}=\frac{1624}{6435}\;.
\end{align}
Proposition \ref{prop} in below shows that if \eqref{cond} holds, then the associated weight function is not necessarily given by
\begin{align}\label{weight}
\dps\frac{\left|x+\frac12\right|}{\dps\sqrt{1+x}}+\frac{\left|x-\frac12\right|}{\sqrt{1-x}}\;,\quad -1< x<1\;.
\end{align}
We thus reformulate the conjecture as follows:
\begin{conjecture}\label{conjecture}
There is a sequence of positive numbers $(\gamma_n)_{n\geq 1}$ fulfilling \eqref{cond} and \eqref{coef} such that the sequence of polynomials $(P_n)_{n\geq 0}$ given by \eqref{poly} is orthogonal with respect to \eqref{weight}.
\end{conjecture}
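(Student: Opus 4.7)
The approach I would take rests on the theory of polynomial mappings for orthogonal polynomial sequences, which matches naturally the ``sieved'' structure visible in \eqref{cond}. Since $w$ is positive, integrable, and even on $(-1,1)$, the existence and uniqueness of a monic OPS $(P_n)$ satisfying \eqref{poly} is automatic; the whole content of Conjecture \ref{conjecture} lies in verifying the explicit values of the $\gamma_n$.

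First I would compute the first few moments of $w$ to check directly that $\gamma_1 = 1/2$ and $\gamma_2 = 1/4$, which yields $P_3(x) = x^3 - \tfrac{3}{4}x =: \pi_3(x)$. The critical points $x=\pm\tfrac12$ of $\pi_3$ coincide with the zeros of the linear factors $|x\pm\tfrac12|$ present in $w$, and $\pi_3$ maps $[-1,1]$ onto $[-\tfrac14,\tfrac14]$ in a generically $3$-to-$1$ way. Next I would invoke a polynomial-mapping criterion in the spirit of Charris--Ismail, Geronimo--Van Assche, Marcell\'an--Petronilho, and the present authors: the relations in \eqref{cond} are precisely those characterising the existence of a monic OPS $(Q_n)$ on $[-\tfrac14,\tfrac14]$ with
\begin{equation*}
P_{3n}(x)=Q_n(\pi_3(x)),\quad n\geq 0,
\end{equation*}
the measure of orthogonality of $(Q_n)$ being the pushforward $(\pi_3)_*(w(x)\,dx)$. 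There are then explicit rational formulas expressing the $\gamma_n$ in terms of the recurrence coefficients $(b_n)$ of $(Q_n)$.

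The heart of the argument is then the explicit computation of this pushforward. Using $x=\cos\theta$ one has $\pi_3(\cos\theta)=\tfrac14\cos(3\theta)$, and the factors $|x\pm\tfrac12|$ together with $\sqrt{1\pm x}$ rewrite as trigonometric quantities in $\theta/2$ and $\theta/2\pm\pi/6$. A careful case analysis across the three branches of $\pi_3^{-1}$ should yield a closed-form density on $[-\tfrac14,\tfrac14]$, from which $b_1, b_2, b_3$ can be extracted; translating back via the polynomial-mapping correspondence then yields simultaneously the structural identities in \eqref{cond} (essentially built into the construction) and the explicit values in \eqref{coef}.

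The main obstacle I anticipate is the explicit identification of the pushforward measure. The coincidence of the critical points of $\pi_3$ with the factors $|x\pm\tfrac12|$ in $w$ gives strong reason to expect a tractable closed form, but the combined bookkeeping of absolute-value branches in $w$ and of the three preimage branches of $\pi_3^{-1}$ is delicate, and establishing positivity and integrability of the resulting density on the whole interval will require careful attention.
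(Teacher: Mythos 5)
Your proposal follows essentially the same route as the paper: both reduce the problem to the cubic Chebyshev mapping $\widehat{T}_{3}$, identify the induced (pushforward) weight on $[-1/4,1/4]$, compute its first few moments, and translate the resulting recurrence coefficients $s_n=\tfrac14\gamma_{3n-2}\gamma_{3n}$ back into the $\gamma_n$. The only ingredient you leave implicit is the device the paper uses to make the ``translate back'' step rigorous for all $n$ --- namely that $(16\,s_n)$ is a chain sequence because the true interval of orthogonality is $[-1/4,1/4]$, which guarantees that the recursively defined $\gamma_n$ stay positive and satisfy \eqref{cond}.
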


In Section \ref{2}, we link Conjecture \ref{conjecture} with polynomial mappings and, in Section \ref{3}, arrive at an affirmative answer.

\section{A symmetric system of orthogonal polynomials}\label{2}
Let $\widehat{T}_{3}$ and $\widehat{U}_{2}$ denote the monic Chebyshev polynomials of the
first and second kind, respectively, given by
$$
\widehat{T}_3(x):=x^3-\frac34 x=\frac14\,\cos(3\theta)\;,\quad
\widehat{U}_2(x):=x^2-\frac14=\frac14\frac{\sin (3\theta)}{\sin\theta},
\quad \theta=\arccos x\,.
$$

\begin{proposition}\label{prop}
Define $\gamma_0:=0$ and let $(\gamma_n)_{n\geq1}$ be a sequence of nonzero complex numbers such that
\begin{align*}
\gamma_{3n}+\gamma_{3n+1}=\frac12\;, \quad
\gamma_{3n+2}=\frac14\;,
\end{align*}
for each nonnegative integer $n$. Let $(P_n)_{n\geq0}$ be the sequence of monic orthogonal polynomials given by
$$
P_{n+1}(x)=xP_n(x)-\gamma_nP_{n-1}(x)\;.
$$
Then
\begin{align*}
P_{3n}(x)&=Q_{n}\big(\widehat{T}_{3}(x)\big)\;,\\[7pt]
P_{3n+1}(x)&=\displaystyle \frac{Q_{n+1}\big(\widehat{T}_{3}(x)\big)+\gamma_{3n+1}xQ_{n}\big(\widehat{T}_{3}(x)\big)}{\widehat{U}_2(x)}\;, \\[7pt]
P_{3n+2}(x)&=\displaystyle
\frac{xQ_{n+1}\big(\widehat{T}_{3}(x)\big)+\frac14\gamma_{3n+1}Q_{n}\big(\widehat{T}_{3}(x)\big)}{\widehat{U}_2(x)}\;,
\end{align*}
where $(Q_n)_{n\geq0}$ is the sequence of monic orthogonal polynomials given by
\begin{align*}
Q_{n+1}(x)=xQ_n(x)-\frac14\,\gamma_{3n-2}\gamma_{3n}\,Q_{n-1}(x)\;.
\end{align*}
Assume furthermore that $\gamma_n>0$ for each positive integer $n$.
Then $(P_n)_{n\geq0}$ and $(Q_n)_{n\geq0}$ are orthogonal polynomial sequences with respect to
certain positive measures, say $\mu_P$ and $\mu_Q$ respectively.
Suppose that $\mu_Q$ is absolutely continuous
with weight function $w_Q$ on $[\xi,\eta]$ with $-1/4\leq \xi<\eta\leq 1/4$.
Then $\mu_P$ is also absolutely continuous with weight function
$$
w_P(x):=\big|\widehat{U}_{2}(x)\big|w_Q\big(\widehat{T}_{3}(x)\big)
$$
on $E:=\widehat{T}_{3}^{-1}\big([\xi,\eta]\big)$.
\end{proposition}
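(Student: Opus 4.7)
My plan is to establish the three polynomial mapping identities by induction on $n$, then derive the orthogonality and weight function via Favard's theorem and a change of variables $y = \widehat{T}_3(x)$.

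For the induction, the base case $n=0$ is a direct check using $\gamma_0 = 0$, $\gamma_1 = 1/2$ (forced by the pair condition), $Q_0 = 1$, and $Q_1(y) = y$. For the inductive step, assume the three formulas hold at $n$ and apply the $P$-recurrence three times. First, $P_{3n+3} = xP_{3n+2} - \tfrac14 P_{3n+1}$ (using $\gamma_{3n+2} = 1/4$) telescopes to
\[
P_{3n+3} = \frac{(x^2 - \tfrac14)\,Q_{n+1}(\widehat{T}_3(x))}{\widehat{U}_2(x)} = Q_{n+1}(\widehat{T}_3(x)),
\]
giving the first identity at $n+1$ with no further hypothesis. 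Next, $P_{3n+4} = xP_{3n+3} - \gamma_{3n+3}P_{3n+2}$; rewriting $x(x^2-1/4) = \widehat{T}_3(x) + x/2$ and regrouping, the condition $\gamma_{3n+3} + \gamma_{3n+4} = 1/2$ is precisely what matches the coefficient of the $xQ_{n+1}(\widehat{T}_3(x))$ term with $\gamma_{3n+4}$, and the remaining terms force
\[
Q_{n+2}(y) = yQ_{n+1}(y) - \tfrac14\gamma_{3n+1}\gamma_{3n+3}\,Q_n(y),
\]
the claimed $Q$-recurrence. Finally, $P_{3n+5} = xP_{3n+4} - \gamma_{3n+4}P_{3n+3}$ combined with $x^2 = \widehat{U}_2(x) + 1/4$ yields the third identity.

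For the measure and weight, Favard's theorem applied to the positive sequences $(\gamma_n)$ and $(\tfrac14\gamma_{3n-2}\gamma_{3n})$ produces positive measures $\mu_P$ and $\mu_Q$ with compact support. Since $\widehat{T}_3'(x) = 3\widehat{U}_2(x)$, the map $\widehat{T}_3\colon E \to [\xi,\eta]$ is a $3$-to-$1$ covering, and a change of variables produces
\[
\int_E |\widehat{U}_2(x)|\,g(\widehat{T}_3(x))\,w_Q(\widehat{T}_3(x))\,dx = \int_\xi^\eta g(y)\,w_Q(y)\,dy
\]
for every continuous $g$. In particular $\int_E w_P\,dx = \int w_Q\,dy$, fixing the normalization of $\mu_P$. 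To show $\mu_P = w_P\,dx$, it suffices (by determinacy of the compact moment problem) to verify $\int_E P_n w_P\,dx = 0$ for every $n \geq 1$, since the standard recurrence argument then propagates this through the symmetry identity $c_{n,m+1} + \gamma_m c_{n,m-1} = c_{n+1,m} + \gamma_n c_{n-1,m}$ (with $c_{n,m} := \int P_n P_m w_P\,dx$) into full orthogonality $c_{n,m} = h_m\delta_{n,m}$. For $n = 3k$ with $k \geq 1$, $\int_E P_{3k}w_P\,dx = \int Q_k w_Q\,dy = 0$ by orthogonality of $Q_k$. For $n = 3k+j$ with $j \in \{1,2\}$, substituting the mapping identities and changing variables reduces the integral to a combination of the fiber sums $\sum_{\ell=1}^3 1/\widehat{U}_2(x_\ell)$ and $\sum_{\ell=1}^3 x_\ell/\widehat{U}_2(x_\ell)$ over the three roots of $\widehat{T}_3(x)=y$; both vanish because $\pm 1/2$ are critical points of $\widehat{T}_3$, as seen by computing $-P'(\pm 1/2)/P(\pm 1/2) = 0$ with $P(x) = \widehat{T}_3(x)-y$ and using partial fractions for $1/\widehat{U}_2(x)$ and $x/\widehat{U}_2(x)$.

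The main obstacle is the combined algebraic bookkeeping: in the induction, the three identities must interlock so that the $Q$-recurrence emerges with precisely the coefficient $\tfrac14\gamma_{3n-2}\gamma_{3n}$, and the two symmetric conditions play distinct essential roles. The condition $\gamma_{3n+2}=1/4$ enables the compression $P_{3n+3}\to Q_{n+1}$ through the factor $\widehat{U}_2$, while $\gamma_{3n}+\gamma_{3n+1}=1/2$ aligns the residual $xQ_{n+1}$ cross-term in the $P_{3n+4}$ step with the target formula. The secondary technical point is the fiber-sum vanishing, which is the mechanism by which the absence of an $x^2$-term in $\widehat{T}_3$ (equivalently, the double vanishing of $\widehat{T}_3'$ at $\pm 1/2$) produces orthogonality of $P_n$ to constants uniformly across residue classes.
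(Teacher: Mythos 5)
Your argument is correct, but it takes a genuinely different route from the paper: there the proposition is disposed of in one line by invoking the general polynomial-mapping machinery of \cite{MP10} (Theorems 2.1 and 3.4 and Remark 3.5, with $(k,m,r)=(3,0,0)$, $a_n^{(j)}=\gamma_{3n+j}$, $b_n^{(j)}=0$, so that $\pi_3=\widehat{T}_3$, $\theta_0=1$, $\eta_2=\widehat{U}_2$), whereas you reprove this special case from scratch. Your induction is sound --- the identities $x\widehat{U}_2(x)=\widehat{T}_3(x)+\tfrac12x$ and $x^2=\widehat{U}_2(x)+\tfrac14$ make the three steps interlock exactly as you describe, with $\gamma_{3n+2}=\tfrac14$ producing the factor $x^2-\tfrac14$ that cancels $\widehat{U}_2$, and $\gamma_{3n+3}+\gamma_{3n+4}=\tfrac12$ delivering both the coefficient $\gamma_{3n+4}$ of the cross term and the product $\tfrac14\gamma_{3n+1}\gamma_{3n+3}$ in the $Q$-recurrence --- and the measure-theoretic half is also right: the fiber sums $\sum_\ell 1/\widehat{U}_2(x_\ell)$ and $\sum_\ell x_\ell/\widehat{U}_2(x_\ell)$ vanish precisely because $\widehat{U}_2$ divides $\widehat{T}_3'=3\widehat{U}_2$, so that $P'(\pm\tfrac12)=0$ in your logarithmic-derivative computation (the absence of the $x^2$ term in $\widehat{T}_3$ is not actually what is used there). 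Two small points you should make explicit: (i) the reduction ``$\int_E P_n w_P\,dx=0$ for all $n\ge1$ implies full orthogonality'' needs $\int_E w_P\,dx\neq0$ and the nonvanishing of the $\gamma_n$, both available here; and (ii) the determinacy you invoke to conclude $\mu_P=w_P\,dx$ (up to normalization) requires compact support, which follows because positivity together with $\gamma_{3n}+\gamma_{3n+1}=\tfrac12$ forces $0<\gamma_n\le\tfrac12$, hence bounded recurrence coefficients. What the paper's route buys is brevity and generality (the cited theorems cover arbitrary degree-$k$ mappings with nonzero $b$'s); what yours buys is a self-contained, elementary proof that makes visible exactly where each hypothesis enters, at the cost of being tailored to $k=3$, $m=0$.
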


\begin{proof}
The proof is a straightforward application of \cite[Theorem 2.1, Theorem 3.4, and Remark 3.5]{MP10},
choosing therein $(k,m,r)=(3,0,0)$ and
$$
a_n^{(j)}:=\gamma_{3n+j}\;,\quad b_n^{(j)}:=0\;,
$$
for each nonnegative integer $n$ and $j\in\{0,1,2\}$.
Under these conditions, the polynomials $\pi_k$, $\theta_m$, and $\eta_{k-1-m}$
defined in \cite[Theorem 2.1]{MP10} are given by
$$
\pi_3=\widehat{T}_3\;,\quad \theta_0=1\;,\quad \eta_2=\widehat{U}_2\;,
$$
which completes the proof.
\end{proof}

\section{Proof of Conjecture \ref{conjecture}}\label{3}


Let $w: (-1,1)\to [4, \infty)$ be given by
\begin{align*}
&w(x):=\\
&\frac{1}{\left(\displaystyle\cos\frac{\arccos x}{3}-\frac12\right)\sqrt{\displaystyle 1+\cos\frac{\arccos x}{3}}}+\frac{1}{\left(\displaystyle\cos\frac{\arccos x}{3}+\frac12\right)\sqrt{\displaystyle 1-\cos\frac{\arccos x}{3}}}\;.
\end{align*}
Let $w_Q$ be the weight function (supported) on $\big[-1/4,1/4\big]$ defined by
$$
w_Q(x):=w(4 x)\;,\quad -\frac14< x<\frac14\,.
$$
We see at once that 
\begin{align*}
&w(\cos\theta)\\
&=\frac{1}{\left(\cos\dps\frac{\theta}{3}-\dps\frac12\right)\sqrt{1+\cos\dps\frac{\theta}{3}}}
+\frac{1}{\left(\cos\dps\frac{\theta}{3}+\dps\frac12\right)\sqrt{1-\cos\dps\frac{\theta}{3}}}\;, \\[7pt]
& =\frac{1}{2\sqrt{2}}\left(\sec\dps\frac{\theta-2\pi}{6}\sec\dps\frac{\theta+2\pi}{6}\sec\dps\frac{\theta}{6}
+\sec\dps\frac{\theta-\pi}{6}\sec\dps\frac{\theta+\pi}{6}\csc\dps\frac{\theta}{6}\right)\;, \quad\theta\in\big(0,\pi\big)\;.
\end{align*}
This allows us to show that $w$ is an even function on $(-1,1)$. 
Moreover,
\begin{align*}
&w\big(\cos(3\theta)\big)=\\[7pt]
&\frac{1}{\left|\cos\theta-\dps\frac12\right|\sqrt{1+\cos\theta}}+\frac{1}{\left|\cos\theta+\dps\frac12\right|\sqrt{1-\cos\theta}}\;,\quad \theta\in\big(0,\pi\big)\setminus\left\{\frac{\pi}3,\frac{2\pi}3\right\}\,.
\end{align*}
The careful reader should carry through the details of this little calculation, analyzing the cases
$0<3\theta<\pi$, $\pi<3\theta<2\pi$, and $2\pi<3\theta<3\pi$ separately.
Writing $x=\cos \theta$, $0<\theta<\pi$, we obtain
$$
\big|\widehat{U}_{2}(x)\big|w_Q\big(\widehat{T}_{3}(x)\big)
=\frac{\left|x+\frac12\right|}{\sqrt{1+x}}+\frac{\left|x-\frac12\right|}{\sqrt{1-x}}\;,\quad -1<x<1\,.
$$
Define the linear functional $\mathcal{L}: \mathbb{R}[x]\to\mathbb{R}$ by
$$
\mathcal{L}[f]:=\int_{-1/4}^{1/4} f(x) w_Q(x)\,\mathrm{d}x=\int_{0}^{\pi} f\Big(\frac{\cos\theta}{4}\Big) w(\cos\theta)\sin\theta\,{\rm d}\theta\,.
$$
Set $\Delta_{-1}:=1$ and $\Delta_n:=\det(\mu_{i+j})_{i,j=0}^n$, where $\mu_n:=\mathcal{L}[x^n]$ for each nonnegative integer $n$. Obviously, $\mathcal{L}$ is positive definite (see \cite[Definition $3.1$, p. $13$]{C78}) and, therefore, $\Delta_n>0$ (see \cite[Theorem $3.4$, p. $15$]{C78}). Define
$$
s_0:=0, \quad s_n:=\frac{\Delta_n \Delta_{n-2}}{\Delta_{n-1}^2}\;,
$$
for each positive integer $n$. Since the weight function $w_Q$ is even, $\mathcal{L}$ is symmetric (see \cite[Definition $4.1$, p. $20$]{C78}). By \cite[Theorem $4.2$, p. $19$ and Theorem $4.3$, p. $21$]{C78}, it follows that the sequence $(Q_n)_{n\geq 0}$ of monic orthogonal polynomials with respect to $\mathcal{L}$ satisfies
\begin{align*}
Q_{n+1}(x)=xQ_n(x)-s_n Q_{n-1}(x)\;.
\end{align*}
Since the true interval of orthogonality (see \cite[Definition $5.2$, p. $29$]{C78}) of $\mathcal{L}$ is $[\xi_1,\eta_1]=[-1/4,1/4]$, $(16\, s_n)_{n\geq 1}$ is a chain sequence by \cite[Theorem $2.1$, p. $108$]{C78}. Thus there exists a sequence $(g_n)_{n\geq 0}$ such that (see \cite[Definition $5.1$, p. $91$]{C78})
\begin{align*}
0\leq g_0<1\;,\quad 0<g_n<1\;,\quad 16\,s_n=(1-g_{n-1})g_n\;,
\end{align*}
for each positive integer $n$. By \cite[Theorem $5.2$, p. $93$]{C78} we may assume $g_0=0$.
Define a sequence $(\gamma_n)_{n\geq 0}$ as follows:
\begin{align}\label{gammas-good}
\gamma_{3n}:=\frac{1}{2}\;g_n, \quad \gamma_{3n+1}:=\frac{1}{2}\,(1-g_n), \quad \gamma_{3n+2}:=\frac14\;.
\end{align}
Clearly, this sequence satisfies \eqref{cond} and $s_n=1/4\,\gamma_{3n-2}\gamma_{3n}$
for each positive integer $n$. Moreover, it also satisfies \eqref{coef}.
Indeed, since $\mathcal{L}$ is a symmetric functional, $\mu_{2n+1}=0$ for each nonnegative integer $n$.
Furthermore,
$$
\mu_0=\mathcal{L}[1]=\int_0^\pi w(\cos\theta)\sin\theta\,{\rm d}\theta=2\sqrt{2}
$$
and, similarly,
$$
\mu_2=\frac{7\sqrt{2}}{120}\;,\quad
\mu_4=\frac{107\sqrt{2}}{40320}\;,\quad
\mu_6=\frac{835\sqrt{2}}{6150144}\;.
$$
Therefore,
$$
\Delta_0=2\sqrt{2}\;,\quad
\Delta_1=\frac{7}{30}\;,\quad
\Delta_2=\frac{\sqrt{2}}{4500}\;,\quad
\Delta_3=\frac{3187}{476756280000}\;,
$$
and so 
$$
s_1=\frac{7}{240}\;,\quad
s_2=\frac{4}{245}\;,\quad
s_3=\frac{15935}{1009008}\;.
$$
Consequently, since $g_n=16s_n/(1-g_{n-1})$ for each positive integer $n$, we get
$$
g_0=0\;,\quad
g_1=\frac{7}{15}\;,\quad
g_2=\frac{24}{49}\;,\quad
g_3=\frac{3187}{6435}\;.
$$
This allows us to verify that the sequence $(\gamma_n)_{n\geq 0}$ defined by \eqref{gammas-good} indeed fulfils \eqref{coef}.
Finally, Proposition \ref{prop} implies the truthfulness of Conjecture \ref{conjecture}.

\section*{Acknowledgements}
KC and JP are supported by the Centre for Mathematics of the University of Coimbra --UID/MAT/00324/2019, funded by the Portuguese Government through FCT/MEC and co-funded by the European Regional Development Fund through the Partnership Agreement PT2020.
MNJ supported by UID/Multi/04016/2019, funded by FCT. MNJ also thanks the Instituto Polit\'ecnico de Viseu and CI\&DETS for their support.

\end{document}